\newtheorem{theorem}{Theorem}
\begin{document}

	\title{Chaos on the Multi-Dimensional Cube}	
	
	\author[]{Marat Akhmet\thanks{Corresponding Author, Tel.: +90 312 210 5355, Fax: +90 312 210 2972, E-mail: marat@metu.edu.tr} }
	\author[]{Ejaily Milad Alejaily}
	\affil[]{Department of Mathematics, Middle East Technical University, 06800 Ankara, Turkey}

	\date{}
	\maketitle

	\noindent \textbf{Abstract.} In this article, we show that a chaotic behavior can be found on a cube with arbitrary finite dimension. That is, the cube is a quasi-minimal set with Poincar\`{e} chaos. Moreover, the dynamics is shown to be Devaney and Li-Yorke chaotic. It can be characterized as a domain-structured chaos for an associated map. Previously, this was known only for unit section and for Devaney and Li-Yorke chaos. \\\\ 
	
	Chaos has become a very important concept that is deeply integrated into many, if not most, fields of science such as physics, biology, medicine, engineering, culture, and human activities \cite{Bolotin,Skiadas}. The chaotic behavior of some physical and biological properties was formerly attributed to random or stochastic processes or uncontrolled forces \cite{Lynch,Korsch}. Appearance of chaos in deterministic systems drew the borderline between (deterministic) chaos and stochastic noise. The idea is manifested in the chaotic behavior of simple dynamical systems. However, the randomness theory of Kolmogorov–Martin-Löfwhich still can provide a deeper understanding of the origins of deterministic chaos \cite{Bolotin}. The fundamental theoretical framework of chaos was developed in last quarter of the twentieth century. During that period, different types and definitions of chaos where formulated. In general, chaos can be defined as aperiodic long-term behavior in a deterministic system that exhibits sensitive dependence on initial conditions \cite{Kellert}. Devaney \cite{Devaney} and Li-Yorke \cite{Yorke} chaos are the most frequently used types, which are characterized by transitivity, sensitivity, frequent separation and proximality. Another common type occurs through period-doubling cascade which is a sort of route to chaos through local bifurcations \cite{Feigenbaum,Scholl,Sander}. In the papers \cite{AkhmetUnpredictable,AkhmetPoincare}, Poincaré chaos was introduced through the unpredictable point concepts. Further, it was developed to unpredictable functions and sequences. 
	
	Whoever searches in this field can discern from the literature that there is a scientific conception that chaos is everywhere. Realizing such an ideation needs to developed our mathematical tools to conceptualize all manifestation of the phenomenon. Strictly speaking, we should develop simple chaotic mechanisms that has the ability to emulate complex behaviors. Investigating the fundamental aspects of high-dimensional chaotic states is necessary in this direction. Indeed, mathematical modeling of real-world problems show that real life is very often a high–dimensional chaos and even chaotic activities in our everyday lives are difficult to described via low-dimensional systems \cite{Ivancevic}.
	
	Recently, in papers \cite{AkhmetSimilarity,AkhmetDomainStruct}, we have developed a new method of chaos formation which depends rather on the way of partition of the domain, than on a map. That is, the map is a natural consequence of the structure of the domain to be chaotic. In the present study, we extend the approach of consideration of the methodological problem on \textit{generosity} of chaos as dynamical phenomenon in real world, science and industry. This question has not been discussed in our previous researches and the present study is a complementary to those in \cite{AkhmetSimilarity,AkhmetDomainStruct}. The generosity of the phenomenon is understood in two aspects. The first one is connected to the number of models which admit chaotic dynamics. This problem is difficult to be discussed since the number of differential, discrete and other equations exhibiting chaos is still neglectingly small if you compare with the number of those admitting regular behavior. This is understandable since the development of the theory. The second aspect of the generosity concerns the density of chaotic points in the domain or the state space of the dynamics. This question has not been considered in detail (as far as we know) in literature. This is because the chaotic domains are usually considered as fractal objects, that is, there dimension is less than the dimension of the state space. Consequently it is assumed with out discussion that the points of chaos are sparse and the domain admits a small density only for one-dimensional unimodal dynamics as is the case for the unit section where it was proven that all points of the set can be chaotic \cite{Devaney,Yorke}.\\
	
	As a first step towards the goal, we show in the present work how to establish multi–dimensional chaos for simple geometrical objects. We focus in the domain structure to construct an invariant set under a chaotic map. 	Our approach is based on a recursive division of the set into an infinite number of elements which satisfy specific conditions. By using infinite sequences to index the set points, a chaotic map can be defined on the set.
	
	
	Without loss of generality, we consider $ F $ as the $ n $-dimensional unit cube, i.e., the Cartesian product of $ n $ unit intervals.
	
	For sake of comprehension, let us start with the line segment $ F = [0, 1] $. Divide $ F $ into $ 4 $ parts $ F_1=[0, \frac{1}{4}], F_2=(\frac{1}{4}, \frac{1}{2}], F_3= (\frac{1}{2}, \frac{3}{4}], F_4= (\frac{3}{4}, 1] $ (see Fig. \ref{LineConst} (b)). Divide again each part $ F_{i_1}, \; i_1=1, 2, 3, 4 $ into $ 4 $ equal parts and denote them by $ F_{i_1 i_2}, \; i_2=1, 2, 3, 4 $. Continue in this procedure such that, at the $ k^{th} $ step of the partition, each part $ F_{i_1 i_2 ... i_{k-1}} $ is divided into $ 4 $ equal parts denoted as $ F_{i_1 i_2 ... i_k}, \; i_p=1, 2, 3, 4, \; p=1, 2, ..., k $. Figure \ref{LineConst} (c) illustrates the second step of the partition of the part $ F_1 $.
	
	Considering the above simple construction, one can observe that: (i) The length of each part $ F_{i_1 i_2 ... i_k} $ approaches zero as the number of steps, $ k $, approaches infinity. This implies that an infinite iteration of the procedure would produce infinitely many points from which the line $ F $ is consisted of. The points can be represented by $ F_{i_1 i_2 ... i_k ...}, \; i_p=1, 2, 3, 4, \; p=1, 2, ... \, $. Thus, the set $ F $ can be defined as the collection of all such points, i.e.,
	\begin{equation}
	F = \big\{ F_{i_1 i_2 ... i_k ...}\; | \; i_p=1, 2, 3, 4, \; p=1, 2, ... \big\}.
	\end{equation}
	(ii) If we define the distance between two nonempty bounded sets $ A $ and $ B $ in $ F $ by $ d(A, B)= \inf \{ d(\textbf{x}, \textbf{y}) : \textbf{x}\in A, \, \textbf{y} \in B \} $, where $ d $ is the usual Euclidean metric, then
	\[ d(F_1, F_3) = d(F_1, F_4) = d(F_2, F_4) = \frac{1}{4}. \]
	
	\begin{figure}[H]
	\centering
	\subfigure[]{\includegraphics[width = 2.0in]{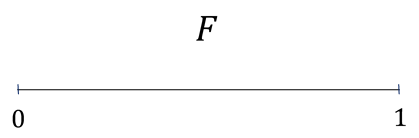}}\hspace{2cm}
	\subfigure[]{\includegraphics[width = 2.0in]{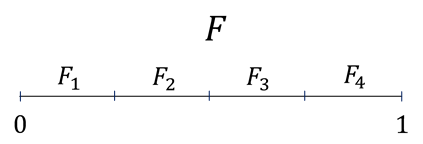}}\hspace{2cm}
	\subfigure[]{\includegraphics[width = 2.0in]{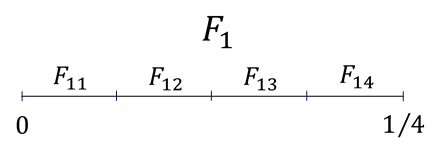}}
	\caption{The partition procedure of the line $ F $}
	\label{LineConst}   				
	\end{figure}
	
	Let us now introduce the map $ \varphi : F \to F $ defined by
	\begin{equation} \label{MapDefn}
	\varphi (F_{i_1 i_2 ... i_k ... }) = F_{i_2 i_3 ... i_k ... },
	\end{equation}
	such that for fixed sequence $ i_1 i_2 ... i_k $,  $ \varphi (F_{i_1 i_2 ... i_k}) = F_{i_2 i_3 ... i_k} $ and $ \varphi (F_{i_1}) = F $. We call each part $ F_{i_2 i_3 ... i_k} $ \textit{subset of order $ k $}, and the map $ \varphi $ the \textit{chaos generating map} or simply the \textit{generator}.
	
	Considering the results in the Appendix, one can prove that the generator is chaotic in the sense of Devaney, Li-Yorke and Poincar\`{e}. Thus, we show that a line segment can be a domain for chaos. This simple case is frankly pointed out in \cite{Devaney} for the Devaney chaos of lositic map ``$ f(x)= 4 x (1-x) $" on the interval $ [0, 1] $.
	
	As a realization of the generator map $ \varphi $, let us consider the double-humped tent map
	\begin{equation*} \label{ModTentMap}
	\varphi(x) = \left\{ \begin{array}{ll}\vspace{1mm}
	4x & \quad 0 \leq x \leq \frac{1}{4}, \\ \vspace{1mm}
	4 (\frac{1}{2}-x) & \quad \frac{1}{4} < x \leq \frac{1}{2}, \\ \vspace{1mm}
	4 (x - \frac{1}{2}) & \quad \frac{1}{2} < x \leq \frac{3}{4}, \\
	4 (1-x) & \quad \frac{3}{4} < x \leq 1.
	\end{array}	\right.\\
	\end{equation*}
	We define the first order subsets as $ F_1=[0, \frac{1}{4}], F_2=(\frac{1}{4}, \frac{1}{2}], F_3= (\frac{1}{2}, \frac{3}{4}], F_4= (\frac{3}{4}, 1] $, and for each $ i= 1, 2, 3, 4 $, we have $ F= \{ \varphi(x) : x \in F_i \} $. For each $ j= 1, 2, 3, 4 $, the second order subsets are defined by $ F_{ij}= \{ x \in F_i : \varphi(x) = F_j \} $. Generally, a $ k^{th} $ order subset can be defined by
	\[ F_{i_1 i_2 ... i_k}= \{ x \in F_{i_1 i_2 ... i_{k-1}} : \varphi(x) = F_{i_2 i_3 ... i_k} \}, \]
	where $ i_p=1, 2, 3, 4, \; p=1, 2, ...,k $. \\
	
	On the basis of the above construction, more general cases of chaotic domain can be investigated. Consider the unit square (cube) $ F $. Similarly, we divide $ F $ into $ 16 $ equal squares ($ 64 $ cubes)  and denote them by $ F_{i_1}, \; i_1=1, 2, ..., 16 $ ($ F_{i_1}, \; i_1=1, 2, ..., 64 $). Illustration of the fist step construction for a square and cube are seen in Fig. \ref{Square&Cube}. Again we divide each square (cube) $ F_{i_1} $ into $ 16 $ equal squares ($ 64 $ cubes) and denote them by $ F_{i_1 i_2}, \; i_2=1, 2, ..., 16 $ ($ F_{i_1 i_2}, \; i_2=1, 2, ..., 64 $). We continue in this procedure such that, at the $ k^{th} $ step of partition, each part $ F_{i_1 i_2 ... i_{k-1}} $ is divided into $ 16 $ equal squares ($ 64 $ cubes) denoted as $ F_{i_1 i_2 ... i_k}, \; i_p=1, 2, ..., 16, \; p=1, 2, ..., k $ ($ F_{i_1 i_2 ... i_k}, \; i_p=1, 2, ..., 64, \; p=1, 2, ..., k $). Likewise, the set $ F $ can be defined by
	\begin{equation*}
	F = \big\{ F_{i_1 i_2 ... i_k ...}\; | \; i_p=1, 2, ..., m, \; p=1, 2, ... \big\},
	\end{equation*}
	where $ m $ is $ 16 $ for the square and $ 64 $ for the cube. One should emphasize here that the number $ m $ is chosen to sufficiently satisfy the (diagonal and separation) properties mentioned in the Appendix. Analogously, in these cases, the generator $ \varphi $ is defined, and it can be verified that the assertions, in the Appendix, are applicable. Thus, the generator is chaotic in the sense of Devaney, Li-Yorke and Poincar\`{e}.
	
	\begin{figure}[H]
	\centering
	\subfigure[]{\includegraphics[width = 1.5in]{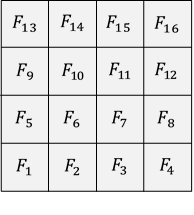}}\hspace{2cm}
	\subfigure[]{\includegraphics[width = 2.1in]{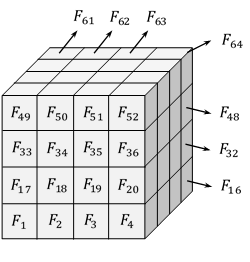}}		\caption{The partition procedure of a square and cube.}
	\label{Square&Cube}   				
	\end{figure}

	For a general case, consider the unit $ n $-dimensional cube $ F $. The first step consists of dividing $ F $ into $ 4^n $ equal parts ($ n $-dimensional sub-cubes) denoted as $ F_{i_1}, \; i_1=1, 2, ..., 4^n $. In the second step, each part $ F_{i_1} $ is again divided into $ 4^n $ equal parts denoted as $ F_{i_1 i_2}, \; i_2=1, 2, ..., 4^n $. Continue in this procedure such that, at the $ k^{th} $ step of partition, each part $ F_{i_1 i_2 ... i_{k-1}} $ is divided into $ 4^n $ equal sub-cubes denoted as $ F_{i_1 i_2 ... i_k}, \; i_p=1, 2, ..., 4^n, \; p=1, 2, ..., k $. At an infinite iteration of this process, the cube $ F $ can be represented as the collection of all points, $ F_{i_1 i_2 ... i_k ...} $, i.e.,
	\begin{equation} \label{nDCubeSet}
	F = \big\{ F_{i_1 i_2 ... i_k ...}\; | \; i_p=1, 2, ..., 4^n, \; p=1, 2, ... \big\}.
	\end{equation}
	
	Let us now verify the diagonal and separation properties stated in the Appendix. At any step $ k $, the diagonal of the sub-cube $ F_{i_1 i_2 ... i_k} $ is given by $ \frac{\sqrt{n}}{4^k} $, and since $ n $ is finite, the diagonal property holds.
	
	For the separation property, we will show that it is valid with a separation constant $ \varepsilon_0 = \frac{\sqrt{n}}{4} $. Consider an edge of the cube as projection of the cube on the dimension. The partition allows to find several sub-cubes such that the distance between them  and a sub-cube $ A $ is not less than $ \frac{1}{4} $ in the projection. Consequently, there exists a sub-cube, say $ B $, which is distanced from $ A $ in all projections not less than $ \frac{1}{4} $. This is why the distance between $ A $ and $ B $ is greater than or equal to $ \frac{\sqrt{n}}{4} $. Here, we point out that the separation constant has a direct relationship with the dimension of the cube. This simply means that the high dimensionality makes chaos more stronger, and this comports with the concept that higher state space dimensions allow for more complex attitudes including chaotic behavior \cite{Hilborn}. 
	
	Similarly, the chaos generating map is defined for the $ n $D cube and using the results in the Appendix, it can be shown that the generator defined in (\ref{MapDefn}) is chaotic in the sense of Devaney, Li-Yorke and Poincar\`{e}. 
	
	In Fig. \ref{ChaoticTrajec}, we provide simulation results illustrate the chaotic behavior of trajectories initiated at points on unit square and cube under the map $ \varphi $.
	
	\begin{figure}[H]
		\centering
		\subfigure[]{\includegraphics[width = 2.8in]{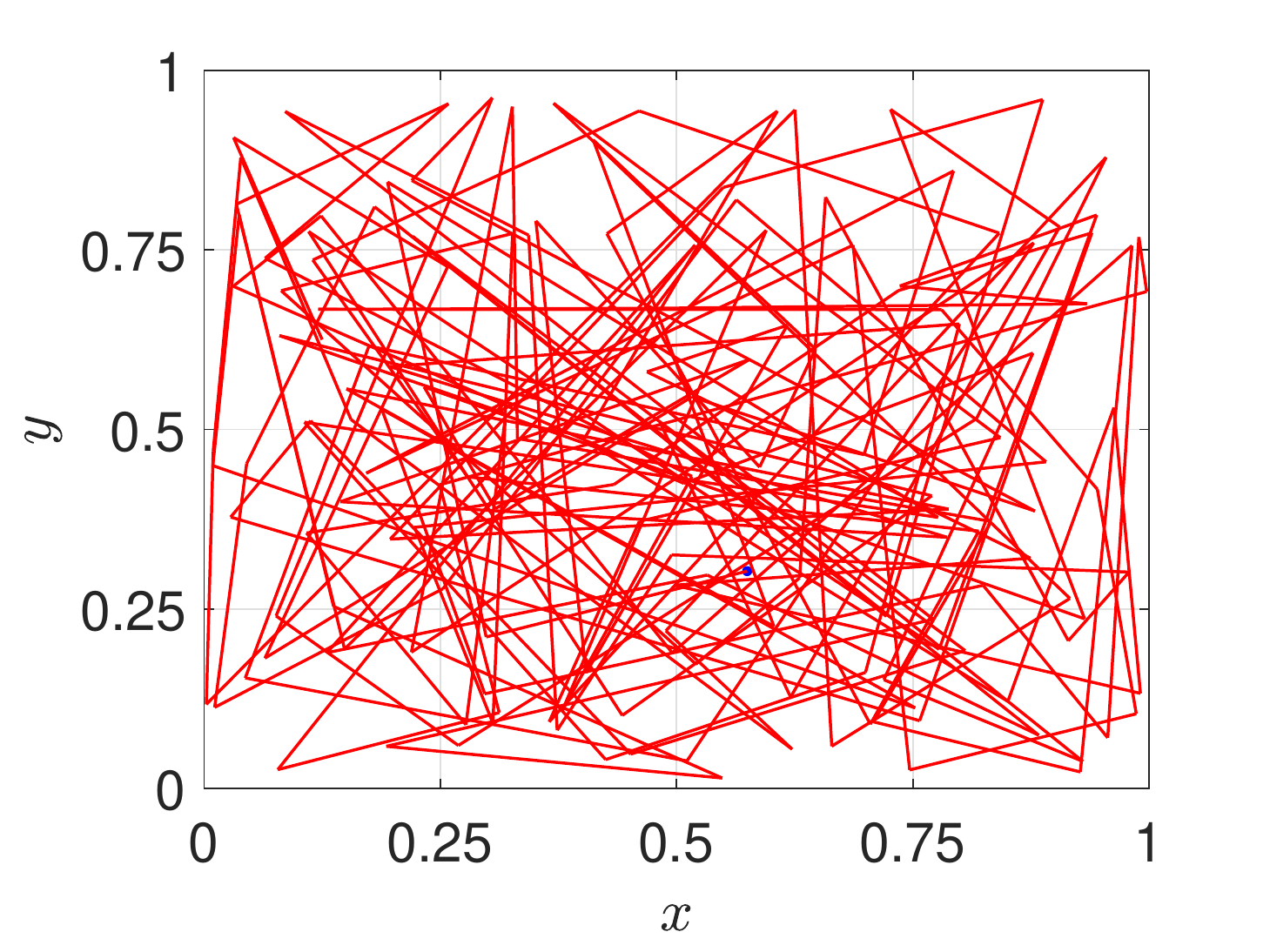}}\hspace{1cm}
		\subfigure[]{\includegraphics[width = 3.2in]{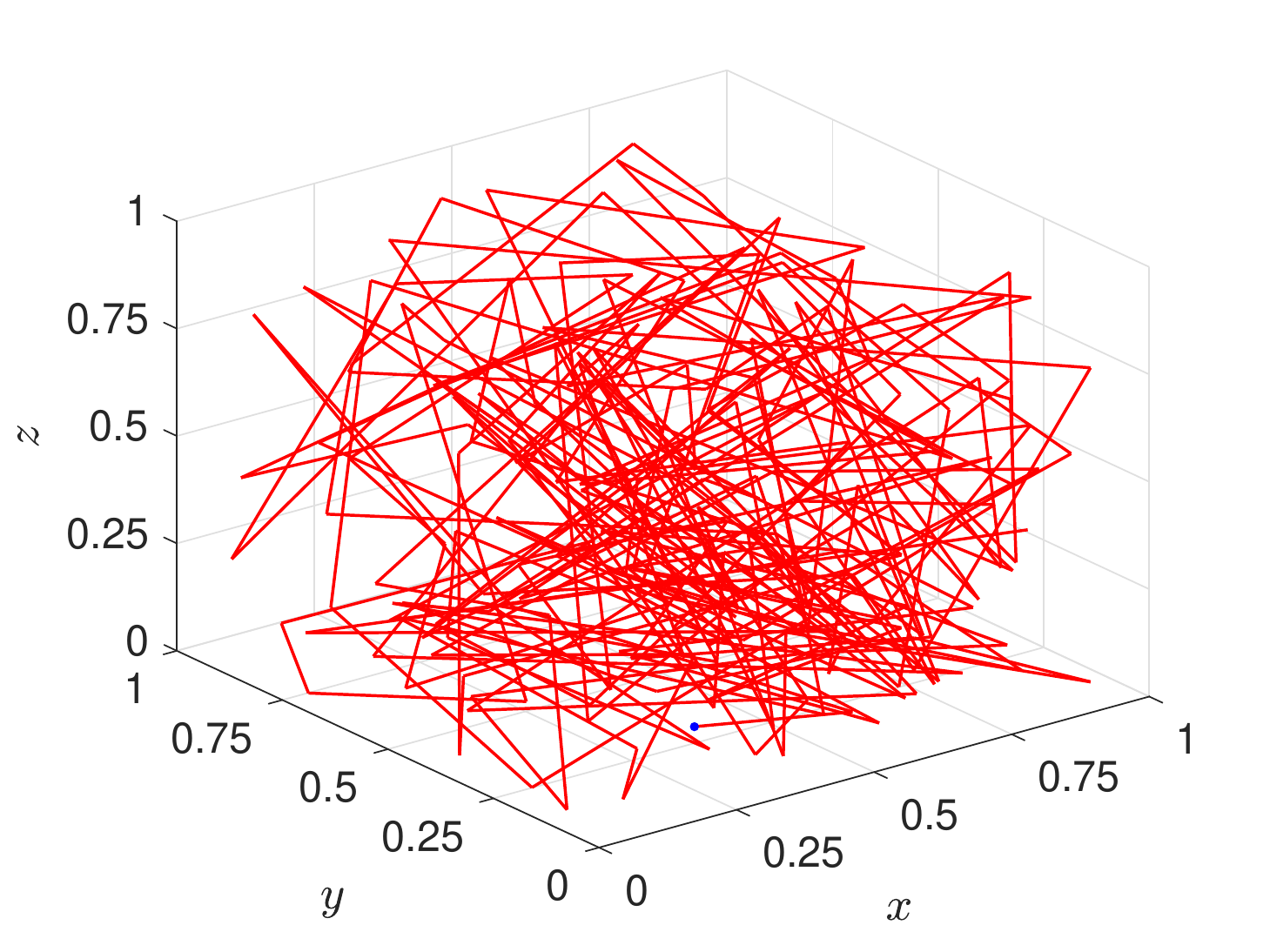}}		\caption{The chaotic behavior of trajectories: (a) initiated at the point $ \approx (0.5746337359, 0.3027738565) $ on the unit square, and (b) initiated at the point $ \approx (0.3766437951, 0.2649318230, 0.0275998135) $ on the unit cube.}
		\label{ChaoticTrajec}   				
	\end{figure}
	
	The proposed approach for generating chaotic dynamics on infinite sets sheds light on the rule of the domain of chaos. In the classical theory, the requirements and properties of chaos are usually described through the map, then, the chaotic behavior is reflected in the image which characterized by some quantitative measures such as fractional dimension, similarity and scaling. Example of such a case is the Lorenz attractor whose capacity dimension is $ 2.06 \pm 0.01 $ \cite{Lorenz}. The same could be said for an invariant subset of the domain such as a Cantor set for the logistic map which has the Hausdorff dimension $ 0.63093 $ \cite{Mandelbrot1}. In the present research we devote more attention to the domain, and less attention to properties of the map. In particular, we impose specific conditions on the domain, namely, the diagonal and separation properties. On the other hand, the continuity and injectivity requirements for the motion are ignored since a chaotic map need not be continuous or injective \cite{Keener,Osikawa,Addabbo,Degirmenci,Li}. We regard these properties of a chaotic map as important characteristics only from the analytical side, that is to say, they are very useful for handling the map to prove presence of chaos \cite{Wiggins,Chen}, however, it is not rigorously correct to consider them as intrinsic properties for chaos. Disregarding such properties helps us to organize a given infinite set as an invariant domain of the map which leads to a chaotic regime with all the domain, the map and the image have equal dimensions. This is different from the classical situation in continuous chaotic dynamics where the dimension of map is usually larger than that for the domain and image as in the case of the above two examples, Lorenz system and the logistic map.
	
	The results presented in this paper reveal that the whole points of an $ n $-dimensional cube are chaotic under the defined map, and it is clear that the approach can be extended for each infinite set which admits topological properties common to the cube. This reinforces the concept that the chaotic points are dense in the state space of dynamics of many real world phenomena. Moreover, the results would be crucial for numerous relevant analyses and applications. For example, a good application can be performed for the Brownian motion \cite{Brown,Wang} where the random movement of particles may be considered in light of the present results.

\newpage
	\section*{Appendix: Chaos for the map $ \varphi $} \label{AFChaos}
	
	Consider the Euclidean space $ \mathbb{R}^n $, and let $ F $ be the unit $ n $-dimensional cube described by (\ref{nDCubeSet}). Define the diameter of a set $ A $ in $ F $ by $ \mathrm{diam}(A) = \sup \{ d(\textbf{x}, \textbf{y}) : \textbf{x}, \textbf{y} \in A \} $. It is easy to see that the diameter of each $ n $-dimensional sub-cube $ F_{i_1 i_2 ... i_k} $ approaches zero as $ k $ approaches infinity (diagonal property), and moreover, for arbitrary $ i $ there exists $ j \neq i $ such that $ d \big( F_i \, , \, F_j \big) \geq \frac{\sqrt{n}}{4} $ (separation properties).
	
	In \cite{AkhmetUnpredictable,AkhmetPoincare} the definitions of unpredictable point and Poincar\`{e} chaos were introduced. This time, we will consider the generator map $ \varphi $ to satisfy the definitions and make conclusion that there exists an unpredictable point in $ F $ and the cube is a closure of the trajectory of the point. Moreover, we directly prove that the dynamics is sensitive. Thus, the following assertion is valid.
	
	\begin{theorem} \label{Thm1}
		The generator $ \varphi $ possesses Poincar\`{e} chaos and the cube is a quasi-minimal set for the dynamics.
	\end{theorem}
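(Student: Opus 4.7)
The plan is to identify the cube $F$ with the one-sided symbolic space over the alphabet $A=\{1,2,\ldots,4^n\}$ via the coding $F_{i_1 i_2 \ldots}$; under this identification $\varphi$ becomes the shift. The diagonal property means that any two points whose codes agree in the first $k$ symbols lie in a common sub-cube of order $k$, hence within $\sqrt{n}/4^k$ of one another, so symbolic closeness transfers to metric closeness in $F$. The separation property supplies a uniform constant $\varepsilon_0 = \sqrt{n}/4$ that will play the dual role of sensitivity constant and divergence constant in the definition of an unpredictable point.

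The heart of the argument is to exhibit a single sequence $\alpha = i_1 i_2 \ldots$ whose associated point $p = F_\alpha$ is unpredictable and has dense orbit. I would enumerate all finite words $w_1, w_2, w_3, \ldots$ over $A$ and build $\alpha$ as a concatenation of blocks $B_1 B_2 B_3 \ldots$, where each $B_k$ first repeats a long prefix of the portion of $\alpha$ already constructed (to force recurrence of the shift near $p$), then contains the word $w_k$ (to force density), and at one designated position inserts an index chosen, via the separation property, so that its first-order sub-cube is at distance at least $\sqrt{n}/4$ from the sub-cube of the corresponding coordinate of $\alpha$. Writing $t_n$ for the position at which $B_n$ begins and $\tau_n$ for the offset of the adversarial insertion inside $B_n$, the shift $\varphi^{t_n}(p)$ agrees with $p$ in the first $n$ coordinates, so the diagonal property yields $\varphi^{t_n}(p) \to p$, while at time $t_n+\tau_n$ one has $d\!\left(\varphi^{t_n+\tau_n}(p),\varphi^{\tau_n}(p)\right) \geq \varepsilon_0$ by construction.

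Once such a $p$ is secured, quasi-minimality of $F$ for the trajectory of $p$ follows immediately: every sub-cube $F_{j_1 \ldots j_m}$ is visited by some iterate $\varphi^k(p)$, because the word $j_1 \ldots j_m$ appears somewhere in $\alpha$; letting $m\to\infty$ and invoking the diagonal property shows that the orbit closure of $p$ is all of $F$. Sensitivity at an arbitrary point $q=F_{j_1 j_2 \ldots}$ is equally direct: given $\delta>0$, choose $m$ with $\sqrt{n}/4^m<\delta$ and a point $q' = F_{j_1 \ldots j_m k_{m+1} k_{m+2} \ldots}$ sharing the first $m$ digits of $q$ but with $k_{m+1}$ selected, via the separation property, so that $F_{k_{m+1}}$ and $F_{j_{m+1}}$ are distanced by at least $\sqrt{n}/4$; then $d(q,q')<\delta$ while $d(\varphi^m(q),\varphi^m(q'))\geq \varepsilon_0$.

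The main obstacle I anticipate is the combinatorial bookkeeping inside the construction of $\alpha$: one must interleave the recurrence padding, the exhaustive enumeration of finite words, and the separating insertions so that none of the three requirements undermines the others, and so that both $t_n$ and $\tau_n$ tend to infinity. This is a variant of the classical Morse--Hedlund construction of a universal sequence for the full shift, and the diagonal and separation properties are precisely what is needed to translate the resulting symbolic statements into the metric statements demanded by Poincar\'e chaos.
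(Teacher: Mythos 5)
Your proposal is correct and follows essentially the same route as the paper: identify points of the cube with their index sequences, use the diagonal property to transfer symbolic closeness to metric closeness and the separation constant $\varepsilon_0=\sqrt{n}/4$ for divergence, prove sensitivity exactly as the paper does, and obtain Poincar\`{e} chaos and quasi-minimality from a constructed sequence whose shifts are recurrent, separating, and contain every finite word. The only difference is that you spell out the block construction of the unpredictable point explicitly (merging it with the dense-orbit point), whereas the paper delegates this construction to Lemma 3.1 of the cited work of Akhmet and Fen.
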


\begin{proof}
	To prove that $ \varphi $ is topologically transitive, we utilize the diagonal property to show the existence of an element $ l= F_{i_1 i_2 ... i_k ...} $ of $ F $ such that for any subset $ F_{i_1 i_2 ... i_q} $ there exists a sufficiently large integer $ p $ so that $ \varphi^p(l) \in F_{i_1 i_2 ... i_q} $. This is true since we can construct the sequence $ i_1 i_2 ... i_k ... $ such that it contains all sequences of the type $ i_1 i_2 ... i_q $ as blocks.
	
	For sensitivity, fix a point $ \mathcal{F}_{i_1 i_2 ... } \in \mathcal{F} $ and an arbitrary positive number $ \varepsilon $. Due to the diagonal property, and for a sufficiently large $ k $, the distance between $ \mathcal{F}_{i_1 i_2 ... i_k i_{k+1} ...} $ and $ \mathcal{F}_{i_1 i_2 ... i_k j_{k+1} j_{k+2} ...} $ is less than $ \varepsilon $ provided that $ i_{k+1} \neq j_{k+1} $. Since the separation property is valid, there exist an integer $ p $, larger than $ k $, such that $ \mathcal{F}_{i_{p+1} i_{p+2} ...} $ and  $ \mathcal{F}_{j_{p+1} j_{p+2} ...} $ are at a distance $ \varepsilon_0 $ apart. This proves the sensitivity.

	The proof of unpredictability is based on the verification of Lemma 3.1 in \cite{AkhmetPoincare} adopted to the chaos generating map. In a similar way for determining the element $ l $, the Lemma consists of constructing a sequence $ i_1^* i_2^* ... i_k^* ... $ to define an unpredictable point $ l^*= F_{i_1^* i_2^* ... i_k^* ...} \in F $ which satisfies Definition 2.1 in \cite{AkhmetUnpredictable}. Moreover, it can be shown that the trajectory $ \varphi(l^*) $ is dense in $ F $. This is why the cube is a quasi-minimal set and the dynamics is Poincar\`{e} chaotic.
\end{proof}

	Now, let us show that the periodic points are dense in $ F $. A point $ F_{i_1 i_2 i_3 ...} \in F $ is periodic with period $ p $ if its index consists of endless repetitions of a block of $ p $ terms. Fix a member $ F_{i_1 i_2 ... i_k ... } $ of $ F $ and a positive number $ \varepsilon $. Find a natural number $ p $ such that $ \mathrm{diam}(F_{i_1 i_2 ... i_p}) < \varepsilon $ and choose a $ p $-periodic element $ F_{i_1 i_2 ... i_p i_1 i_2 ... i_p ...} $ of $ F_{i_1 i_2 ... i_p} $. It is clear that the periodic point is an $ \varepsilon $-approximation for the considered member. Thus, the map $ \varphi $ possesses the three ingredients of Devaney chaos, namely density of periodic points, transitivity and sensitivity \cite{Devaney}.

	In addition to the Poincar\`{e} and Devaney chaos, it can be shown that the Li-Yorke chaos also takes place in the dynamics of the map $ \varphi $. The proof is similar to that of Theorem 6.35 in \cite{Chen} for the shift map defined on the space of symbolic sequences.

\end{document}